\newcommand{\sub}{\subseteq}
\newcommand{\lra}{\Leftrightarrow}
\newcommand{\ra}{\Rightarrow}
\newcommand{\sm}{\setminus}
\newcommand{\La}{\Lambda}
\newcommand{\lam}{\lambda}
\newcommand{\tmax}{t\op{-Max}}
\newcommand{\Na}{\op{Na}}
\newcommand{\op}{\operatorname}
\newcommand{\nd}{\noindent}
\newtheorem{theorem}{Theorem}[section]
\newtheorem{lemma}[theorem]{Lemma}
\newtheorem{proposition}[theorem]{Proposition}
\newtheorem{corollary}[theorem]{Corollary}
\theoremstyle{definition}
\newtheorem{example}[theorem]{Example}
\newtheorem{question}[theorem]{Question}
\begin{document}

    \title{$w$-Divisoriality in Polynomial Rings}



\author{Stefania Gabelli}
\author{Evan Houston}
\author{Giampaolo Picozza}

\address[Stefania Gabelli and Giampaolo Picozza]{Dipartimento di Matematica, Universit\`{a} degli Studi Roma
Tre,
Largo S.  L.  Murialdo,
1, 00146 Roma, Italy}
\address[Evan Houston]{Department of Mathematics\\
 University of North Carolina at Charlotte\\
 Charlotte, NC 28223 U.S.A.}
\email[Stefania Gabelli]{gabelli@mat.uniroma3.it}

\email[Evan Houston] {eghousto@uncc.edu}

\email[Giampaolo Picozza]{picozza@mat.uniroma3.it}
\thanks{The second author was supported by a visiting grant from
GNSAGA of INdAM (Istituto Nazionale di Alta Matematica).}

\date{}




\begin{abstract}

We extend the Bass-Matlis characterization of local Noetherian
divisorial domains to the non-Noetherian case.  This result is
then used to study the following question: If a domain $D$ is
\emph{$w$-divisorial}, that is, if each $w$-ideal of $D$ is
divisorial, then is $D[X]$ automatically $w$-divisorial?  We show
that the answer is yes if $D$ is either integrally closed or Mori.
\end{abstract}

\maketitle

\section*{Introduction}

Throughout this paper, $D$ denotes an integral domain with
quotient field $K$.  For a nonzero fractional ideal $I$ of $D$, we
set $I^{-1} = (D:I) = \{x \in K \mid xI \sub D\}$, $I_v =
(I^{-1})^{-1}$, $I_t = \bigcup \{J_v \mid J \text{ is a nonzero
finitely generated subideal of I}\}$, and $I_w = \bigcup (I:J)$,
where the union is taken over all finitely generated ideals $J$ of
$D$ with $J_v=D$. An ideal $I$ is said to be a divisorial (resp.
$t$-, $w$-) ideal if $I=I_v$ (resp., $I=I_t$, $I=I_w$).  We assume
familiarity with properties of these star operations.  In
particular, we shall use the fact that each nonzero, nonunit
element of $D$ is contained in a maximal $t$-ideal, and we denote
the set of maximal $t$-ideals of $D$ by $\tmax(D)$.

A domain $D$ is said to be \emph{divisorial} if each of its
nonzero ideals is divisorial.  These domains have been studied by
Bass \cite{ba}, Matlis \cite{ma}, Heinzer \cite{h}, Bazzoni and
Salce \cite{bs}, and Bazzoni \cite{b}.  In \cite{GE} the first
author and S. El Baghdadi studied domains in which each $w$-ideal
of $D$ is divisorial and dubbed such domains \emph{$w$-divisorial}
domains. In part this work is a sequel to that paper.  Our main
goal is to study whether $w$-divisoriality transfers from $D$ to
the polynomial ring $D[X]$.  Although we do not give a definitive
answer, we do show that the property transfers in many cases, and
we analyze the difficulties in the general case.

In Section 1, we extend the Bass-Matlis characterization of
Noetherian divisorial domains to the non-Noetherian setting.
Recall that a nonzero ideal $I$ of $D$ is said to be
\emph{$m$-canonical} if $(I:(I:J))=J$ for each nonzero ideal $J$
of $D$.  We show in Theorem~\ref{t:divequiv} that if $D$ is a
local domain with nonprincipal maximal ideal $M$, then $D$ is
divisorial if and only if $(M:M)$ is a 2-generated $D$-module and
$M$ is an $m$-canonical ideal of $(M:M)$.

Recall \cite{AZ} that $D$ is \emph{weakly Matlis} if each nonzero
element of $D$ is contained in only finitely many maximal
$t$-ideals of $D$ and each nonzero prime $t$-ideal of $D$ is
contained in a unique maximal $t$-ideal. One of the main results
in \cite{GE} is that $D$ is $w$-divisorial if and only if $D$ is
weakly Matlis and $D_M$ is divisorial for each maximal $t$-ideal
$M$ of $D$.  We therefore devote Section 2 to a study of the
weakly Matlis property in polynomial rings. We prove that $D[X]$
is weakly Matlis if and only if $D$ is weakly Matlis and each
upper to zero in $D[X]$ is contained in a unique maximal $t$-ideal
of $D[X]$, and we give examples of weakly Matlis domains $D$ for
which $D[X]$ is not weakly Matlis. Finally, in Section 3 we
consider the main question of the paper. We are able to show that
if $D$ is $w$-divisorial and either integrally closed or Mori,
then $D[X]$ is also $w$-divisorial.


\section{Divisoriality}

We begin with a compilation of some results of Bazzoni-Salce
\cite[Proposition 5.4, Lemma 5.5, Theorem 5.7]{bs}.  Recall that a
domain $D$ is \emph{$h$-local} if each nonzero element of $D$ is
contained in only finitely many maximal ideals of $D$ and each
nonzero prime ideal of $D$ is contained in a unique maximal ideal.

\begin{theorem} \label{t:bs} [{Bazzoni-Salce}] Let $D$ be a domain. Then
$D$ is divisorial if and only if $D$ is $h$-local and $D_M$ is
divisorial for each maximal ideal $M$ of $D$. If $(D,M)$ is a
local divisorial domain with $M$ principal, then $D$ is a
valuation domain. Finally, if $(D,M)$ is a local divisorial domain
with $M$ nonprincipal, then $M^{-1}=(M:M)$ (so that $M^{-1}$ is an
overring of $D$), and exactly one of the following cases occurs:
\begin{enumerate}
\item $(M:M)$ is local with maximal ideal $N$ properly containing $M$,
in which case $N/M$ is simple both as a $D$-module and as a
$(M:M)$-module, $N^2 \sub M$, and $(D:N)=N$;
\item $(M:M)$ is a Pr\"ufer domain with exactly two maximal ideals
$N_1$ and $N_2$, and $M=N_1 \cap N_2$;
\item $(M:M)$ is a valuation domain with maximal ideal $M$.
\end{enumerate}
\end{theorem}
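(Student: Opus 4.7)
The plan is to handle the three assertions of the theorem separately.

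For the local--global characterization, I would argue the forward direction by first extracting $h$-locality from divisoriality. The key point is that for a divisorial domain, one can show finite character of localizations at maximal ideals (producing a non-divisorial ideal from a failure of finite character) and uniqueness of the maximal ideal above each nonzero prime (producing a non-divisorial ideal along two maximals containing a common prime). That $D_M$ is divisorial whenever $D$ is follows from the standard fact that $v$-operations and localization commute suitably in this setting. For the converse, given a nonzero ideal $I$, I would use $h$-locality to write $I=\bigcap_M ID_M$ with finite character; combined with $(ID_M)_v=I_vD_M$ and the divisoriality of each $D_M$, this forces $I=I_v$.

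For the principal-maximal-ideal case, let $M=(p)$ and aim to show $D$ is a valuation domain by showing any two nonzero elements are comparable by divisibility. If $x\in D$ is a nonunit, then $x\in M$, so $x=px_1$; iterating, either the process terminates (so $x$ is an associate of some $p^n$) or $x\in\bigcap_n(p^n)$. Divisoriality applied to $\bigcap_n(p^n)$ and to the ideals generated by remainders should rule out incomparability: given $a,b$ nonzero, one compares their $p$-divisibility depths using the divisoriality of $(a,b)$ and of $(a,b)^{-1}$.

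For the nonprincipal case I would start by proving $M^{-1}=(M:M)$. The containment $\supseteq$ is obvious. For $\subseteq$, take $x\in M^{-1}$; then $xM\subseteq D$, so $xM$ is an ideal of $D$ contained in $D$. If $xM=D$ then $M$ would be invertible, hence principal in the local ring $D$, contradicting the hypothesis; thus $xM\subseteq M$, giving $x\in(M:M)$. This shows $M^{-1}=(M:M)$ is an overring $R$ of $D$, and $M$ is a common ideal of $D$ and $R$. The three cases are obtained by analyzing how many maximal ideals of $R$ sit above $M$: when $R$ has a unique maximal $N\supsetneq M$, one studies $N/M$ as a module over $D$ and $R$, derives $N^2\subseteq M$ from divisoriality of $M$, and checks $(D:N)=N$; when $R$ has two maximals, divisoriality of $M=N_1\cap N_2$ forces $R$ to be Pr\"ufer; when $R=D$, the local divisorial domain with $M$ its own inverse is forced to be a valuation domain.

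The main obstacle will be the third part, both in establishing $M^{-1}=(M:M)$ cleanly when $M$ is nonprincipal and in carrying out the trichotomy. The structural claims in case (1) about simplicity of $N/M$ and $N^2\subseteq M$ require a delicate interplay between divisoriality of $M$ in $D$ and the module structure over $(M:M)$, and case (2) requires recognizing the Pr\"ufer condition from the two-maximal structure together with $M=N_1\cap N_2$.
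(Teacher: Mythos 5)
This statement is not actually proved in the paper: it is quoted from Bazzoni--Salce \cite{bs} (their Proposition 5.4, Lemma 5.5 and Theorem 5.7) and used as a black box, so the comparison has to be between your sketch and those cited proofs. Your outline identifies the right architecture (local--global reduction, the principal case, the trichotomy for $(M:M)$), and your argument that $M^{-1}=(M:M)$ when $M$ is nonprincipal is correct and is exactly the standard one. But as a proof the proposal has a genuine gap in the forward direction of the first assertion: you derive divisoriality of $D_M$ from ``the standard fact that $v$-operations and localization commute suitably.'' The commutation $(ID_M)_v=I_vD_M$ is false in general for non-finitely-generated $I$; it holds under finite-character/$h$-local hypotheses, which is precisely what is being established at the same moment. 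The actual proof must first extract $h$-locality from divisoriality --- a genuinely nontrivial step, resting on Matlis's characterization of $h$-local domains via the decomposition $K/D\cong\bigoplus_M K/D_M$ --- and only then localize. Your one-line plans for ``producing a non-divisorial ideal'' from a failure of finite character or of independence are statements of intent, not arguments.

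The second and third assertions are likewise left as programs rather than proofs. For $M=(p)$ principal you say that divisoriality of $(a,b)$ and $(a,b)^{-1}$ ``should rule out incomparability''; the actual mechanism (if $I\subseteq xD$ and $x^{-1}I\neq D$ then $I\subseteq xpD$, so a divisorial ideal is either principal or an intersection $\bigcap_n xp^nD$, whence finitely generated ideals are principal and the local domain is a valuation domain) is absent. For the trichotomy you explicitly defer the substantive points: that $(M:M)$ has at most two maximal ideals, that two maximal ideals force the Pr\"ufer property, and the claims in case (1) that $N/M$ is simple over both rings, $N^2\subseteq M$, and $(D:N)=N$. Since these structural statements are the entire content of the third assertion, the proposal cannot stand as a proof; it is a plausible table of contents for one, with one step (the localization commuting) that would fail as written.
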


As noted in \cite{b}, (the first part of) this theorem effectively
reduces the question of divisoriality of a domain to the local
case.  It is well known that a local Noetherian domain $(R,M)$ is
divisorial if and only if $R$ has dimension one and $M^{-1}$ is a
2-generated $R$-module \cite[Theorems 6.2, 6.3]{ba} and
\cite[Theorem 3.8]{ma}. (See \cite{q} for an extension to the Mori
case.) Our next result gives a characterization of divisoriality
in the general case.  In the statement of the theorem, we assume
that the maximal ideal $M$ of the local domain $D$ is
nonprincipal. This assumption results in no loss of generality
since by Theorem~\ref{t:bs} if $M$ is principal, then $D$ is a
valuation domain (and this case is well understood).

\begin{theorem} \label{t:divequiv}
Let $D$ be a local domain with nonprincipal maximal ideal $M$, and
assume that $D$ is not a valuation domain. Then $D$ is a
divisorial domain if and only if $(M:M)$ is a 2-generated
$D$-module and $M$ is an $m$-canonical ideal of $(M:M)$.
\end{theorem}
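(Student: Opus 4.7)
My plan is to invoke the Bazzoni-Salce structure theorem (Theorem~\ref{t:bs}) in both directions. Set $T := (M:M)$. Since $D$ is local, non-valuation, with $M$ nonprincipal, divisoriality of $D$ corresponds to case~(1) or case~(2) of Theorem~\ref{t:bs}; case~(3) is excluded because it forces $D = T$ to be a valuation domain. Two preliminary observations are used throughout: $MT = M$ (by definition of $T$), and consequently $(D:T) = M$, so $M$ is the largest $T$-submodule of $D$.

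For the direct implication, I would first argue in cases~(1) and~(2) that $T$ is a $2$-generated $D$-module. In case~(1), select $t \in N \sm M$; simplicity of $N/M$ as a $D$-module gives $N = M + Dt$, while simplicity of $N/M$ as a $T$-module forces $T/N \cong N/M \cong D/M$, so the inclusion $D/M \hookrightarrow T/N$ is an equality and hence $T = D + N = D + Dt$. In case~(2), the Chinese Remainder Theorem yields $T/M \cong T/N_1 \times T/N_2$ with $D/M$ embedded diagonally; using the divisorial structure of $D$ one checks that the residue field extensions $D/M \hookrightarrow T/N_i$ are trivial, whence $\dim_{D/M}(T/D) = 1$ and again $T = D + Dt$. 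For the $m$-canonicity of $M$ over $T$, I would show that for any nonzero fractional $T$-ideal $J$, $(M:J)$ is itself a $T$-module and in fact $(D:J) = (M:J)$ (since $(D:J)\cdot J$ is a $T$-submodule of $D$, hence inside $(D:T) = M$); then the divisoriality identity $(D:(D:J)) = J$ for $D$ becomes precisely $(M:(M:J)) = J$.

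For the converse, assume $T = D + Dt$ and that $M$ is $m$-canonical over $T$. Let $I$ be a nonzero $D$-fractional ideal; the goal is $(D:(D:I)) = I$. Passing to $IT = I + tI$, which is a $T$-module, one has $(D:IT) = (M:IT)$ by the observation above, and $m$-canonicity gives $(M:(M:IT)) = IT$. The task is then to descend from $IT$ back to $I$: using $T = D + Dt$, one expresses $I$ as the pullback consisting of $IT$ together with a residue condition modulo $D$ controlled by $t$, and transfers the biduality identity from the $T$-level to the $D$-level. An alternative route is to verify directly, from the hypotheses on $T$, that $D$ satisfies either case~(1) or case~(2) of Theorem~\ref{t:bs}, and then invoke the Bazzoni-Salce theorem to conclude divisoriality.

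The main obstacle I anticipate is the descent in the converse: making rigorous how the single extra generator $t$ links $D$-divisoriality to $T$-divisoriality with respect to $M$, particularly for fractional ideals $I$ that are not $T$-stable, since $(D:I)$ is generally not a $T$-module while $(M:I)$ always is. A secondary delicate point is case~(2) of the direct implication, where establishing triviality of the residue field extensions at both maximal ideals of the Pr\"{u}fer overring $T$ is exactly what enforces the $2$-generation.
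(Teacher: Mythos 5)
Your proof has two genuine gaps, one in each direction. In the forward direction, your exclusion of case~(3) of Theorem~\ref{t:bs} is wrong: if $(M:M)$ is a valuation domain with maximal ideal $M$, this does \emph{not} force $D=(M:M)$. Since $M$ is divisorial and nonprincipal, $T=M^{-1}=(M:M)$ always properly contains $D$, and case~(3) genuinely occurs (e.g., $D=F+M\subsetneqq V=k+M$ with $V$ a valuation domain and $[k:F]=2$); in that case you still have to prove $[T/M:D/M]=2$, which your case analysis does not cover. (By contrast, your argument that $(D:J)=(M:J)$ for every fractional $T$-ideal $J$, so that $m$-canonicity of the ideal $D$ transfers to $m$-canonicity of $M$ over $T$, is correct and works uniformly in all three cases; it is essentially the content of the citation of [hhp, Proposition 5.1] in the paper.)

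The more serious gap is in the converse, and you have identified it yourself: the ``descent'' from the biduality $(M:(M:IT))=IT$ to $(D:(D:I))=I$ is exactly the hard part, and your sketch (``a pullback with a residue condition modulo $D$'') is not an argument. The paper closes this by a different route: using the $m$-canonicity of $M$ in $T$ together with the absence of rings strictly between $D$ and $T$ (a consequence of $\dim_{D/M}T/M=2$), it shows, by a case analysis on the at most two maximal ideals of $T$, that $T$ is the \emph{unique minimal overring} of $D$; Bazzoni's results then give that every nonprincipal fractional $D$-ideal $I$ is already a $T$-module, so $IT=I$ and no descent is needed --- one only has to check in addition that $(D:I)$ is nonprincipal when $I$ is, which follows because $M$ is divisorial (a principal $(D:I)$ would force $II^{-1}\nsubseteq M$, making $I$ invertible, hence principal). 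Your proposed fallback --- verify that $D$ falls into case~(1) or~(2) and then ``invoke Bazzoni--Salce'' --- does not work, because the trichotomy in Theorem~\ref{t:bs} is stated only as a necessary condition for divisoriality, not a sufficient one. Finally, in the converse you must also justify that $T\neq D$ (otherwise $(D:T)=D\neq M$ and your preliminary observation fails); the paper deduces this from the hypothesis that $D$ is not a valuation domain via [bhlp, Proposition 4.1].
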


\begin{proof} We use the notation of Theorem~\ref{t:bs}.  Suppose
that $D$ is divisorial.  Then $M$ is divisorial, and so $M^{-1}
\supsetneqq D$.  Since $M$ is not principal then $M^{-1}=MM^{-1}$,
so that $(M:M)=M^{-1}$. Set $T=(M:M)$. By \cite[Remark after Lemma
5.5]{bs}, $T$ is a 2-generated $D$-module (in fact, $T=D+Du$ for
each $u \in T \setminus D$).  Since $D$ is divisorial, the ideal
$D$ is $m$-canonical, and so $M=(D:T)$ is $m$-canonical by
\cite[Proposition 5.1]{hhp}.

Conversely, assume that $T$ is a 2-generated $D$-module and that
$M$ is an $m$-canonical ideal of $T$.  Since $D$ is assumed not to
be a valuation domain, $M$ is not $m$-canonical in $D$ by
\cite[Proposition 4.1]{bhlp}; hence $T$ properly contains $D$.
Thus $T/M$ is a 2-dimensional vector space over $D/M$, from which
it follows easily that $T=D+Dx$ for each $x \in T \setminus D$. In
particular, there are no rings properly between $D$ and $T$. Since
$T$ is integral over $D$, \cite[Corollary 2.2]{gh} implies that
$T$ has at most two maximal ideals.

Case 1. Suppose that $T$ is actually local. There are then two
subcases: $M$ is the maximal ideal of $T$ or $T$ is local with
maximal ideal properly containing $M$.

Case 1a.  Suppose that $M$ is the maximal ideal of $T$.  Then,
since $M$ is an $m$-canonical ideal of $T$, $T$ is a valuation
domain by \cite[Proposition 4.1]{bhlp}. Since there are no rings
properly between $D$ and $T$, \cite[Theorem 2.4]{gh} then implies
that $T$ is the unique minimal overring of $T$.  By
\cite[Proposition 2.7]{b}, $D$ is divisorial.

Case 1b. Now suppose that $T$ is local with maximal ideal $N
\supsetneqq M$. Since $M$ is an $m$-canonical ideal of $T$, by
\cite[Lemma 4.1]{hhp}, $M$ has a unique minimal fractional
overideal. We shall use this to show that $T$ is the unique
minimal overring of $D$. First, it follows from \cite[Lemma
2.1]{gh} that there are no ideals properly between $N$ and $M$ in
$T$.  Hence $N$ must be the unique minimal overideal of $M$.  Now,
let $x \in K \sm D$ (where $K$ is the common quotient field of $D$
and $T$). If $x \in T$, then we showed above that $T=D+Dx$. If $x
\notin T=(M:M)$, then $Mx \nsubseteq M$, and so the fractional
ideal $M+Mx$ properly contains $M$.  Hence $M+Mx \supseteq N$. Let
$v \in N \sm M$. Then $v \in M+Mx \sub D+Dx$, whence $T=D+Dv \sub
D+Dx$. Hence $T$ is indeed the unique minimal overring of $D$. To
show that $D$ is divisorial in this case, we consider a
nonprincipal ideal $I$ of $D$.  We claim that $I^{-1}=(D:I)$ is
also nonprincipal.  Suppose, on the contrary, that $I^{-1}$ is
principal.  Then $I_v$ is also principal, and we have $I_vI^{-1} =
D \nsubseteq M$.  Since $M$ is divisorial in $D$, it follows that
$II^{-1} \nsubseteq M$. However, this implies that $I$ is
invertible, hence principal, a contradiction. By \cite[Lemma
2.5]{b}, for any nonprincipal fractional ideal $J$ of $D$, $J$ is
an ideal of $T$. It is also easy to see that for such $J$ we have
$(D:J)=(M:J)$.  Thus, since $M$ is an $m$-canonical ideal of $T$,
we have $(D:(D:I))=(M:(D:I))=(M:(M:I))=I$.  Thus $D$ is
divisorial, as desired.

Case 2. Suppose that $T$ has two maximal ideals, say $N_1$ and
$N_2$.  We first claim that $M=N_1N_2$.  To verify this, recall
that $T/M$ is a 2-dimensional vector space over $D/M$, and
consider the chain of subspaces $T/M \supsetneqq N_1/M \supsetneqq
N_1N_2/M \supseteq (0)$.  We then have $MT_{N_i}=N_iT_{N_i}$ for
$i=1,2$. We also have by \cite[Proposition 5.5]{hhp} that
$MT_{N_i}$ is an $m$-canonical ideal of $T_{N_i}$ and therefore by
\cite[Proposition 4.1]{bhlp} that $T_{N_i}$ is a valuation domain.
Thus $T$ is a Pr\"ufer domain.  It now follows from \cite[Theorem
2.4]{gh} that $T$ is the unique minimal overring of $D$.  That $D$
is divisorial now follows from the same argument given in the
preceding paragraph.
\end{proof}

Let $F \sub K$ be fields with $[K:F]=2$, and let
$D=F+(X,Y)K[X,Y]_{(X,Y)}$.  Then $D$ is Noetherian and
2-dimensional, hence not divisorial, but the maximal ideal $M$ of
$D$ is such that $M^{-1}=(M:M)=K[X,Y]_{(X,Y)}$ is a 2-generated
$D$-module (but $M$ is not $m$-canonical in $(M:M)$). It would be
interesting to have a (necessarily non-Noetherian) 1-dimensional
example of a nondivisorial local domain $(D,M)$ with $(M:M)$ a
2-generated $D$-module.


\section{Weakly Matlis polynomial rings}


Let $D$ be a domain with quotient field $K$. For $h \in K[X]$, we
denote by $c(h)$ the $D$-ideal generated by the coefficients of
$h$.  We set $N=\{h \in D[X]
 \mid c(h)=D\}$.  The ring $D[X]_N$ is called the Nagata ring of $D$; this
 ring is also denoted by $D(X)$.
As in \cite{K}, we set $N_v= \{h\in D[X] \mid c(h)_v=D\}$ and call
the domain $D[X]_{N_v}$ the Nagata ring of $D$ with respect to the
$v$-operation.

We have $\op{Max}(D[X]_{N_v})=\{MD[X]_{N_v} \mid M\in \tmax(D)\}$
\cite[Proposition
2.1 (2)]{K} and, for $Q= MD[X]_{N_v}$, $(D[X]_{N_v})_{Q}= D[X]_{MD[X]}=
D_{M}(X)$.

Let $\Lambda$ be a nonempty set of prime ideals of a domain $D$.
Following \cite{AZ}, we say that $\Lambda$ has \emph{finite
character} if each nonzero element of $D$ belongs to at most
finitely many members of $\Lambda$ and that $\Lambda$ is
\emph{independent} if no two members of $\Lambda$ contain a common
nonzero prime ideal.  Thus $D$ is \emph{$h$-local} if
$\op{Max}(D)$ is independent of finite character \cite{ma}, and
$D$ is weakly Matlis if $\tmax(D)$ is independent of finite
character.  (Note that, since a prime ideal minimal over a nonzero
principal ideal is automatically a $t$-ideal, $\tmax(D)$ is
independent if and only if no two of its members contain a common
prime $t$-ideal.)

In the following lemma, the equivalence of (1) and (2) is proved
in \cite[Proposition 4.2]{km}.  (The statement of their result
includes the hypothesis that $D$ be integrally closed, but their
proof does not make use of this.)  We add a third equivalence.

\begin{lemma}\label{finchar} The following conditions are equivalent
for a domain $D$.
    \begin{itemize}
    \item[(1)] $D$ has $t$-finite character.
        \item[(2)] $D[X]$ has $t$-finite character.
        \item[(3)] $D[X]_{N_v}$ has finite character.
            \end{itemize}
\end{lemma}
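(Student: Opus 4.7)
The equivalence of (1) and (2) is cited from \cite[Proposition 4.2]{km}, so the plan is to establish (1) $\Leftrightarrow$ (3) directly, exploiting the bijection $M \leftrightarrow MD[X]_{N_v}$ between $\tmax(D)$ and $\op{Max}(D[X]_{N_v})$ recorded just before the lemma.

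For (3) $\Ra$ (1), I would take a nonzero $a \in D$ and observe that $a \in MD[X]_{N_v}$ iff $a \in M$; hence the maximal ideals of $D[X]_{N_v}$ containing $a$ are in bijection with the maximal $t$-ideals of $D$ containing $a$, and finite character transfers immediately. For (1) $\Ra$ (3), I would write a typical nonzero element of $D[X]_{N_v}$ as $f/g$ with $f \in D[X]$ and $g \in N_v$. Since $g$ is a unit in $D[X]_{N_v}$, one has $f/g \in MD[X]_{N_v}$ iff $f \in MD[X]_{N_v}$, and the decisive step is to identify this condition with $c(f) \sub M$; once this is in hand, any nonzero coefficient $a$ of $f$ lies in every such $M$, so $t$-finite character at $a$ forces finiteness.

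The only real computation is the contraction identity $MD[X]_{N_v} \cap D[X] = MD[X]$. If $gf \in MD[X]$ for some $g \in N_v$, then $c(gf) \sub M$, and the standard content formula $(c(f)c(g))_t = c(fg)_t$ yields $(c(f)c(g))_t \sub M$; since $c(g)$ is finitely generated with $c(g)_v = D$, we have $c(g)_t = D$, whence $c(f)_t \sub M$ and thus $c(f) \sub M$. I do not anticipate any serious obstacle: after this one observation, both implications are immediate consequences of the bijection between $\tmax(D)$ and $\op{Max}(D[X]_{N_v})$.
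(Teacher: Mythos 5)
Your proposal is correct and follows essentially the same route as the paper: the paper cites $(1)\Leftrightarrow(2)$ from Kabbaj--Mimouni and derives $(1)\Leftrightarrow(3)$ from the identification $\op{Max}(D[X]_{N_v})=\{MD[X]_{N_v}\mid M\in\tmax(D)\}$, which is exactly the bijection you exploit. Your contraction identity $MD[X]_{N_v}\cap D[X]=MD[X]$ (via Dedekind--Mertens) is just the routine verification the paper leaves implicit.
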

\begin{proof} $(1)\lra(3)$.  This follow from that fact that
$\op{Max}(D[X]_{N_v}) = \{MD[X]_{N_v} \mid M \in \tmax{D}\}$
\cite[Proposition 2.1 (2)]{K}.
\end{proof}

The situation is not so simple for independence.

    \begin{proposition} \label{wm} The following conditions are
    equivalent for a domain $D$.
    \begin{itemize}
\item[(1)] $\tmax(D)$ is independent (respectively, $D$ is weakly
Matlis), and each upper to zero in $D[X]$ is contained in a unique
maximal $t$-ideal of $D[X]$.
        \item[(2)] $\tmax(D[X])$ is independent (respectively,
        $D[X]$ is weakly Matlis).
        \item[(3)] $\op{Max}(D[X]_{N_v})$ is independent (respectively,
        $D[X]_{N_v}$ is $h$-local).
        \end{itemize}
        \end{proposition}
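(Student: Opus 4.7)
My plan is to establish the equivalence of the three ``independence'' statements first and then adjoin finite character via Lemma \ref{finchar} to obtain the parenthetical (weakly Matlis / $h$-local) versions simultaneously. The key structural input I would invoke is Kang's work \cite{K}: every maximal $t$-ideal of $D[X]$ has the form $MD[X]$ with $M \in \tmax(D)$; contraction gives an inclusion-preserving bijection between primes of $D[X]_{N_v}$ and prime $t$-ideals of $D[X]$, sending $\op{Max}(D[X]_{N_v})$ onto $\tmax(D[X])$; and a nonzero prime $t$-ideal of $D[X]$ is either an extension $pD[X]$ of a prime $t$-ideal $p$ of $D$ or else an upper to zero (which is necessarily a $t$-ideal in this case).

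For $(2)\lra(3)$, I would apply the contraction bijection directly. Because it is inclusion-preserving and maps $\tmax(D[X])$ onto $\op{Max}(D[X]_{N_v})$, two distinct maximal ideals $M_1 D[X]_{N_v}$ and $M_2 D[X]_{N_v}$ share a common nonzero prime of $D[X]_{N_v}$ if and only if the corresponding maximal $t$-ideals $M_1D[X]$ and $M_2D[X]$ share a common nonzero prime $t$-ideal of $D[X]$. So the independence statements in (2) and (3) correspond under this bijection.

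For $(1)\lra(2)$, I would fix distinct $M_1,M_2\in\tmax(D)$ and dissect a hypothetical common nonzero prime $t$-ideal $P$ of $M_1D[X]$ and $M_2D[X]$. By the dichotomy above, either $P=pD[X]$ with $p$ a prime $t$-ideal of $D$ contained in $M_1\cap M_2$, or $P$ is an upper to zero contained in both $M_iD[X]$. Over all choices of $M_1,M_2$, the first possibility is excluded precisely when $\tmax(D)$ is independent, while the second is excluded precisely when every upper to zero sits in at most one $MD[X]$. Since an upper to zero can fail to sit in any $MD[X]$ only when it is not a $t$-ideal (such uppers to zero meet $N_v$ and so are not in any maximal $t$-ideal of $D[X]$), while an upper to zero that is a $t$-ideal is automatically contained in some maximal $t$-ideal, ``at most one'' and ``a unique'' coincide for the relevant uppers to zero. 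This gives $(1)\lra(2)$.

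To pass to the ``weakly Matlis / $h$-local'' half of the statement, I would simply couple finite character to each independence condition and appeal to Lemma \ref{finchar}, which identifies $t$-finite character in $D$, $t$-finite character in $D[X]$, and finite character in $D[X]_{N_v}$. The main point of care I anticipate is writing down precisely the list of prime $t$-ideals of $D[X]$ and confirming that the contraction map induces the claimed bijection with primes of $D[X]_{N_v}$; once those facts from \cite{K} are in hand the rest is a short case analysis on the type of the common prime.
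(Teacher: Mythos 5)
Your skeleton is the paper's: reduce to the three ``independence'' statements via Lemma~\ref{finchar} and then classify a hypothetical common prime as extended or an upper to zero. But two of the structural facts you lean on are false as stated, and one of them leaves a genuine hole in $(2)\Rightarrow(1)$. First, contraction does \emph{not} carry $\op{Max}(D[X]_{N_v})$ onto $\tmax(D[X])$, nor the primes of $D[X]_{N_v}$ onto the prime $t$-ideals of $D[X]$. By \cite[Proposition 1.1]{hz}, $\tmax(D[X])$ consists of the ideals $M[X]$ with $M\in\tmax(D)$ \emph{together with} the uppers to zero $Q$ satisfying $c(Q)_t=D$; these latter meet $N_v$ and blow up in $D[X]_{N_v}$, so $\op{Max}(D[X]_{N_v})$ sees only the extended maximal $t$-ideals \cite[Proposition 2.1(2)]{K}. (Also, a non-$t$-prime of $D[X]$ lying inside some $M[X]$ is still disjoint from $N_v$ and survives the localization.) Your $(2)\Leftrightarrow(3)$ and $(1)\Rightarrow(2)$ therefore omit the maximal $t$-ideals that are uppers to zero; the omission is harmless only because such primes have height one and so can never share a nonzero prime with another maximal $t$-ideal --- a case that must be stated, as the paper does.

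Second, and this is the real gap: there are no uppers to zero that fail to be $t$-ideals. An upper to zero has height one and contains a nonzero element, hence is minimal over a principal ideal and is automatically a $t$-ideal; and an upper to zero meeting $N_v$ is not ``in no maximal $t$-ideal'' --- it \emph{is} a maximal $t$-ideal of $D[X]$, hence is contained in exactly one, namely itself. Your reconciliation of ``at most one $M[X]$'' with ``a unique maximal $t$-ideal'' quietly restricts condition (1) to ``the relevant uppers to zero,'' but (1) quantifies over all of them; if your picture were accurate, (1) would be false for any domain admitting such an upper while (2) could still hold, and the asserted equivalence would collapse. With the corrected facts the reconciliation is immediate: every upper to zero lies in at least one maximal $t$-ideal (itself if it meets $N_v$, some $M[X]$ otherwise, and never in two distinct uppers to zero, which are incomparable), so ``at most one'' and ``exactly one'' agree. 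Once these two points are repaired, your argument coincides with the paper's.
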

\begin{proof} In view of Lemma~\ref{finchar}, we need prove only
the ``independent'' equivalences.

$(1)\ra(2)$.  Let $Q_1 \ne Q_2$ be maximal $t$-ideals of $D[X]$.
If either of the $Q_i$ is an upper to zero, then (since uppers to
zero have height one) $Q_1 \cap Q_2$ cannot contain a nonzero
prime.  Hence we may assume that both $Q_i$ are extended from
maximal $t$-ideals of $D$ \cite[Proposition 1.1]{hz}. Independence
of $\tmax(D)$ then implies that $Q_1 \cap Q_2$ cannot contain any
extended primes, and the other hypothesis implies that $Q_1 \cap
Q_2$ cannot contain an upper to zero.  Hence $\tmax(D[X])$ is
independent.

$(2)\ra(3)$.  This follows from the fact that the maximal ideals
   of $D[X]_{N_v}$ are extended from the (non-upper to zero) maximal
 $t$-ideals of $D[X]$.

$(3)\ra (1)$.  Independence of $\tmax(D)$ follows from the fact
that maximal $t$-ideals of $D$ extend to maximal ideals of
$D[X]_{N_v}$. If an upper to zero $Q$ is contained in $N_1 \cap
N_2$, where $N_1,N_2$ are maximal $t$-ideals of $D[X]$, then
$QD[X]_{N_v}$ is a prime of $D[X]_{N_v}$ which is contained in the
maximal ideals $N_iD[X]_{N_v}$, $i=1,2$, a contradiction.
\end{proof}

Recall from \cite{hz} that $D$ is a \emph{UMT-domain} if each
upper to zero in $D[X]$ is a maximal $t$-ideal.

\begin{corollary} \label{c:umt} Suppose that $D$ has a unique
maximal $t$-ideal (equivalently, $D$ is a local domain whose
maximal ideal is a $t$-ideal) or is a UMT-domain. Then the
following conditions are equivalent.
\begin{itemize}
\item[(1)] $D$ is weakly Matlis.
\item[(2)] $D[X]$ is weakly Matlis.
\item[(3)] $D[X]_{N_v}$ is $h$-local.
\end{itemize}
\end{corollary}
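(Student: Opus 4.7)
The plan is to invoke Proposition~\ref{wm} directly: that proposition already gives the equivalence among (2), (3), and the condition ``$D$ is weakly Matlis \emph{and} every upper to zero in $D[X]$ is contained in a unique maximal $t$-ideal of $D[X]$.''  Thus it suffices to prove, under either of the two hypotheses on $D$, that the second half of this condition is automatic.  So the whole corollary reduces to the following claim: if either $D$ has a unique maximal $t$-ideal or $D$ is a UMT-domain, then every upper to zero in $D[X]$ lies in exactly one maximal $t$-ideal of $D[X]$.

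For the UMT-domain case the claim is immediate: by definition every upper to zero $Q$ is itself a maximal $t$-ideal of $D[X]$, and since uppers to zero have height one, no strictly larger prime $t$-ideal of $D[X]$ can exist.  Hence $Q$ is contained in the unique maximal $t$-ideal $Q$ itself.

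For the unique-maximal-$t$-ideal case, let $M$ be the unique maximal $t$-ideal of $D$ and let $Q$ be an upper to zero of $D[X]$.  If $Q$ is itself a maximal $t$-ideal, we are again done.  Otherwise $Q$ is properly contained in some maximal $t$-ideal $P$ of $D[X]$.  By \cite[Proposition 1.1]{hz}, $P$ is either an upper to zero or extended from a maximal $t$-ideal of $D$.  The first alternative is impossible since uppers to zero have height one and $P \supsetneq Q$, so $P = M[X]$.  Thus $M[X]$ is the unique maximal $t$-ideal of $D[X]$ containing $Q$, as required.

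There is no real obstacle; the content of the corollary is essentially contained in Proposition~\ref{wm}, and the work consists of the two short case-checks above, both of which rest on the height-one property of uppers to zero together with the classification of maximal $t$-ideals of $D[X]$ in \cite[Proposition 1.1]{hz}.  Once these are in hand, applying Proposition~\ref{wm} gives $(1)\Leftrightarrow(2)\Leftrightarrow(3)$.
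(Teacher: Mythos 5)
Your proposal is correct and is exactly the paper's argument: the authors also reduce the corollary to Proposition~\ref{wm} by observing that either hypothesis forces each upper to zero in $D[X]$ to be contained in a unique maximal $t$-ideal, leaving the verification as ``follows easily.'' Your two case-checks (UMT: the upper to zero is itself the unique maximal $t$-ideal containing it; unique maximal $t$-ideal $M$: any maximal $t$-ideal properly containing an upper to zero must be extended, hence equals $M[X]$) are precisely the omitted details and are sound.
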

\begin{proof}  The result follows easily from Proposition~\ref{wm},
in view of the fact that the hypothesis on $D$ guarantees that
each upper to zero in $D[X]$ is contained in a unique maximal
$t$-ideal.
\end{proof}


The polynomial ring over a weakly Matlis domain need not be weakly
Matlis, as Examples~\ref{e:mcadam} and \ref{e:mori} below show.
First, we characterize, in the strong Mori case, when $D$ weakly
Matlis implies $D[X]$ weakly Matlis. Recall that a domain $R$ is a
\emph{strong Mori domain} if it satisfies the ascending chain
condition on $w$-ideals.  We need the following facts from
\cite{wm}.  A domain $R$ is a strong Mori domain if and only if
$R$ has $t$-finite character and $R_M$ is Noetherian for each
maximal $t$-ideal $M$ of $R$ \cite[Theorem 1.9]{wm}; a strong Mori
domain satisfies the principal ideal theorem \cite[Corollary
1.11]{wm}; and if $R$ is a strong Mori domain, then so is $R[X]$
\cite[Theorem 1.13]{wm}.  Our next result also uses the fact that
in any Mori domain, each nonzero element is contained in at most
finitely many prime $t$-ideals \cite[Theorem 2.1]{hlv}.

\begin{proposition} \label{p:wmnoe} Suppose that $D$ is a
strong Mori (e.g., Noetherian) domain. Then $D[X]$ is weakly
Matlis if and only if $D$ is a weakly Matlis domain with at most
one maximal $t$-ideal of height greater than 1.
\end{proposition}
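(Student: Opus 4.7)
The plan is to apply Proposition~\ref{wm}, which says $D[X]$ is weakly Matlis if and only if $D$ is weakly Matlis and every upper to zero in $D[X]$ is contained in a unique maximal $t$-ideal of $D[X]$. Under the strong Mori hypothesis the task therefore reduces to translating this ``unique containment'' condition into the stated height condition on $\tmax(D)$.

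The easy direction $(\Leftarrow)$ rests on the following lemma: if $M\in\tmax(D)$ has height one, then no upper to zero of $D[X]$ lies inside $M[X]$. Indeed, $D_M$ is then a one-dimensional Noetherian local domain, so $MD_M[X]$ has height one in $D_M[X]$; an upper to zero $Q\subseteq M[X]$ would extend to an upper to zero $QD_M[X]\subseteq MD_M[X]$ of the same height in $D_M[X]$, forcing equality, but the two primes have different contractions to $D_M$. Granting this, if at most one maximal $t$-ideal $M^*$ of $D$ has height greater than one, then any upper to zero contained in some $M[X]$ must lie in $M^*[X]$, while the remaining uppers to zero are themselves maximal $t$-ideals of $D[X]$. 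In either case each upper to zero sits inside a unique maximal $t$-ideal, and Proposition~\ref{wm} gives that $D[X]$ is weakly Matlis.

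For the converse $(\Rightarrow)$ I would argue the contrapositive: assume distinct $M_1,M_2\in\tmax(D)$ of height at least two, and construct an upper to zero in $M_1[X]\cap M_2[X]$. Pick a nonzero $a\in M_1\cap M_2$. By \cite[Theorem 2.1]{hlv}, only finitely many prime $t$-ideals contain $a$; in particular only finitely many height-one primes $P_1,\dots,P_n$ do. For each $i$, $M_1\cap M_2\not\subseteq P_i$, since otherwise $M_1M_2\subseteq P_i$ and primality would force some $M_j\subseteq P_i$, contradicting that $M_j$ has height at least two while $P_i$ has height one. Prime avoidance now produces $b\in (M_1\cap M_2)\setminus\bigcup_i P_i$, and the natural candidate is $Q=(aX-b)K[X]\cap D[X]$, the unique upper to zero of $D[X]$ containing the linear polynomial $aX-b$.

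The main step, and the principal technical hurdle I expect, is verifying $Q\subseteq M_i[X]$ for $i=1,2$. After localizing it suffices to show $(aX-b)K[X]\cap D_{M_i}[X]\subseteq M_iD_{M_i}[X]$; since $(a,b)D_{M_i}$ has height two in the Noetherian local ring $D_{M_i}$ by the choice of $b$, every minimal prime of $(aX-b)D_{M_i}[X]$ in $D_{M_i}[X]$ must be an upper to zero---the alternative of an extended height-one prime $P_0D_{M_i}[X]$ containing $(a,b)$ is ruled out by height. The delicate point is to upgrade the height-two condition to the assertion that $(aX-b)$ is actually prime in $D_{M_i}[X]$, equivalently that $a,b$ is a regular sequence in $D_{M_i}$, so that $D_{M_i}[X]/(aX-b)\cong D_{M_i}[b/a]$ embeds in $K$ and is therefore a domain. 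Once this is established, $QD_{M_i}[X]=(aX-b)D_{M_i}[X]$ has content $(a,b)D_{M_i}\subseteq M_iD_{M_i}$, giving $Q\subseteq M_i[X]$, and the resulting common upper to zero in $M_1[X]\cap M_2[X]$ contradicts the independence of $\tmax(D[X])$. Promoting ``height two'' to ``regular sequence'' requires ruling out embedded associated primes of $aD_{M_i}$, which I expect to follow from a Mori-style primary-decomposition theorem for principal ideals in strong Mori domains---this is the step where the full strong Mori hypothesis does real work beyond $t$-finite character and the Noetherian-localization PIT.
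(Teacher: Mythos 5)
Your reduction to Proposition~\ref{wm}, your $(\Leftarrow)$ direction, and your choice of $a,b$ and of the upper to zero $Q=(aX-b)K[X]\cap D[X]$ in the $(\Rightarrow)$ direction all match the paper's argument. The problem is the step you yourself flag as the main hurdle: you propose to prove $Q\subseteq M_i[X]$ by showing that $aX-b$ generates a prime ideal of $D_{M_i}[X]$, equivalently that $a,b$ is a regular sequence in $D_{M_i}$, and you leave this to an unproved ``Mori-style primary decomposition theorem.'' That route cannot be completed in general: $a,b\in M_iD_{M_i}$ form a regular sequence only if $\operatorname{depth}D_{M_i}\geq 2$, and a two-dimensional Noetherian local domain need not be Cohen--Macaulay (if $\operatorname{depth}D_{M_i}=1$, then $M_iD_{M_i}$ is an associated prime of $aD_{M_i}$ for every nonzero $a\in M_iD_{M_i}$, so \emph{no} choice of $b$ makes $a,b$ a regular sequence). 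So as written the forward direction is incomplete, and the auxiliary claim you hope to establish is false in general.

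The repair is short and is exactly what the paper does: you do not need $(aX-b)$ to be prime, only that $Q$ is the \emph{unique minimal prime} of $(aX-b)D[X]$, and you have already assembled both ingredients for that. By the principal ideal theorem (valid in the strong Mori domain $D[X]$, or after localizing in the Noetherian ring $D_{M_i}[X]$), every minimal prime of $aX-b$ has height one, hence is either an upper to zero or of the form $P[X]$ with $\operatorname{ht}P=1$; the latter is excluded because, by your choice, $b\notin P$ for every height-one prime $P$ containing $a$. Since $aX-b$ has degree one it is irreducible in $K[X]$, so the only upper to zero containing it is $Q$. Therefore $Q$ is the unique minimal prime of $aX-b$, and the prime $M_i[X]$, which contains $aX-b$ because $a,b\in M_i$, must contain a minimal prime of $aX-b$, namely $Q$. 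This yields $Q\subseteq M_1[X]\cap M_2[X]$ and the desired contradiction with the independence of $\tmax(D[X])$, with no regular-sequence or primality input at all.
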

\begin{proof} Suppose that $D[X]$ is weakly Matlis.  Then
$D$ is weakly Matlis by Proposition~\ref{wm}.  Suppose that $M,N$
are maximal $t$-ideals of $D$ of height greater than 1. Pick any
nonzero element $a \in M \cap N$, and then choose $b \in M \cap N$
such that $b$ is not in any of the height one primes containing
$a$. Then $aX-b \notin P[X]$ for every height one prime $P$ of
$D$. Hence by the principal ideal theorem, the upper to zero
$Q=(aX-b)K[X] \cap D[X]$ is the only minimal prime of $aX-b$.
However, this implies that $Q \sub M[X] \cap N[X]$, contradicting
that $D[X]$ is weakly Matlis.

The converse follows easily from Proposition~\ref{wm} (and the
description of $\tmax(D[X])$ given in \cite[Proposition 1.1]{hz}).
\end{proof}

\bigskip


We now give an example of a weakly Matlis (in fact, $h$-local)
Noetherian domain with two maximal $t$-ideals of height two.

\begin{example} \label{e:mcadam}
In \cite{m}, McAdam gives an example of a Noetherian domain $T$
having exactly 2 maximal ideals $M,N$, both of height 2, such that
$M \cap N$ does not contain a nonzero prime.  Thus $T$ is
$h$-local.   We would like to transform the example, if necessary,
to ensure that the maximal ideals are $t$-ideals.  We do this by
means of a pullback.  Note that in McAdam's example, it is
possible to arrange that $T/M$ and $T/N$ are the same field $k$,
with $k$ arbitrary. Thus we may assume that $k$ contains a
subfield $F$ with $[k:F] < \infty$. Now let $D$ be the domain
arising from the following pullback diagram of canonical
homomorphisms:

$$  \begin{CD}
        D   @>>>    F \times F\\
        @VVV        @VVV    \\
        T  @>>>   T/(M \cap N) \cong k \times k\\
\end{CD}$$
\bigskip

\nd (The downward maps are containments.) It is well known that
$D$ is a Noetherian domain with spectrum homeomorphic to that of
$T$.  In particular, $D$ is $h$-local and therefore weakly Matlis.
We also have that the maximal ideals $M_0=M \cap D$ and $N_0 = N
\cap D$  of $D$ are (divisorial and therefore) $t$-ideals.  (For
example, if $m \in M \setminus D$, then $mN_0 \sub mN \sub D$, so
that $m \in N_0^{-1} \setminus D$.  Thus $N_0$ is divisorial.)  By
Proposition~\ref{p:wmnoe}, $D[X]$ is not weakly Matlis.
\end{example}

Next, we show that the ``strong Mori'' hypothesis in
Proposition~\ref{p:wmnoe} cannot be weakened to ``Mori''. The
following is an example of a one-dimensional semilocal (hence
automatically $h$-local and therefore weakly Matlis) Mori domain
$D$ for which $D[X]$ is not weakly Matlis.

\begin{example} \label{e:mori}
 Let $k$ be a field, and let $u,t$ be
indeterminates. Set $V_1=k(u)[t]_{(t)}$ and $V_2=k(u)[t]_{(t+1)}$.
Note that $V_1=k(u)+tV_1$ and $V_2=k(u)+(t+1)V_2$.  Set
$D_1=k+tV_1$ and $D_2=k+(t+1)V_2$.  Then for $i=1,2$, $D_i$ is a
local, one-dimensional  integrally closed Mori domain
\cite[Proposition 3.4]{bar}. Note that $K=k(u,t)$ is the common
quotient field of all these domains. Moreover, if $P_i=(X-u)K[X]
\cap D_i[X]$, then $P_i \subseteq N_i[X]$, where $N_i$ is the
maximal ideal of $D_i$. (This is well known. If $P \nsubseteq
N_i[X]$, then $u$ satisfies a polynomial over $D_i$ with a unit
coefficient; however, since $D_i$ is local and integrally closed,
the $u,u^{-1}$-lemma would then imply that $u$ or $u^{-1}$ is in
$D_i$, a contradiction.) Now set $D=D_1 \cap D_2$.  It is clear
that $t \in D_1$, and, since $t=-1+(t+1)$, we also have $t \in
D_2$.  Hence $t, t+1 \in D$.  It is now easy to see that $t(t+1)u
\in N_1 \cap N_2 \subseteq D$, so that $D$ has quotient field $K$.
Set $M_i=N_i \cap D$.  We claim that $M_1,M_2$ are the only
maximal ideals of $D$.  This follows from the fact that any
element of $D \setminus (M_1 \cup M_2)$ is a unit in both $D_1$
and $D_2$, hence also a unit in $D$. Moreover, we have
$D_{M_i}=D_i$ for $i=1,2$ by \cite[Corollary 8]{pr}. It follows
that $D$ is a one-dimensional integrally closed Mori domain (as
the intersection of two Mori domains) with exactly two maximal
ideals. Therefore, $D$ is automatically weakly Matlis. However, it
is easy to see that $P=(X-u)K[X] \cap D[X] \subseteq M_1[X] \cap
M_2[X]$; therefore, since $M_1[X]$ and $M_2[X]$ are maximal
$t$-ideals of $D[X]$, $D[X]$ is not weakly Matlis. (In fact,
$(X-u^j)K[X] \cap D[X] \subseteq M_1[X] \cap M_2[X]$ for each
nonzero integer $j$, so $M_1[X] \cap M_2[X]$ actually contains
infinitely many prime ideals.) Of course, $D$ is not strong Mori.
\end{example}


\section{$w$-divisoriality in polynomial rings}

In this section, we address

\begin{question} \label{q:wdiv} If $D$ is a $w$-divisorial domain,
is $D[X]$ also $w$-divisorial?
\end{question}

\medskip
It is convenient to recall that a domain $D$ is divisorial if and
only if it is $h$-local and $D_{M}$ is divisorial for each maximal
ideal $M$ (Theorem~\ref{t:bs}).  Similarly, $D$ is $w$-divisorial
if and only if it is weakly Matlis and $D_{M}$ is divisorial for
each maximal $t$-ideal $M$ \cite[Theorem 1.5]{GE}.

\begin{proposition} \label{p:divis}  The following conditions are equivalent
for a domain $D$.
    \begin{itemize}
    \item[(1)] $D[X]$ is $w$-divisorial.
 \item[(2)] $D[X]_{N_v}$ is divisorial.
 \end{itemize}

In case these equivalent conditions hold, $D$ is $w$-divisorial.

\end{proposition}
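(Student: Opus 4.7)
The plan is to use Theorem~\ref{t:bs} and \cite[Theorem 1.5]{GE} to reduce each of conditions (1) and (2) to the conjunction of a global hypothesis (weakly Matlis, respectively $h$-local) and divisoriality of the appropriate local rings, and then to match the two reductions using Proposition~\ref{wm} together with the identifications $(D[X]_{N_v})_{MD[X]_{N_v}} = D[X]_{M[X]} = D_M(X)$ for $M \in \tmax(D)$ and the description of $\tmax(D[X])$ from \cite[Proposition 1.1]{hz} as uppers to zero plus extensions of maximal $t$-ideals of $D$.

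For $(1)\Rightarrow(2)$, I would read off weakly Matlis of $D[X]$ via \cite[Theorem 1.5]{GE}, then use Proposition~\ref{wm} to obtain $h$-locality of $D[X]_{N_v}$; each maximal-ideal localization of $D[X]_{N_v}$ is of the form $D[X]_{M[X]}$ with $M[X] \in \tmax(D[X])$, hence divisorial by hypothesis, and Theorem~\ref{t:bs} concludes. For $(2)\Rightarrow(1)$, Theorem~\ref{t:bs} yields $h$-locality plus divisorial local rings for $D[X]_{N_v}$, which Proposition~\ref{wm} converts into weakly Matlis of $D[X]$. For each $Q \in \tmax(D[X])$ I would split into two cases: either $Q$ is an upper to zero, so $D[X]_Q = K[X]_{QK[X]}$ is a DVR and trivially divisorial, or $Q = M[X]$ with $M \in \tmax(D)$, so $D[X]_Q = (D[X]_{N_v})_{MD[X]_{N_v}}$ is divisorial by hypothesis; \cite[Theorem 1.5]{GE} then gives $w$-divisoriality of $D[X]$.

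For the final assertion, under the equivalent conditions $D$ inherits weakly Matlis from $D[X]$ via Proposition~\ref{wm}, and for each $M \in \tmax(D)$, divisoriality of $D_M(X)$ forces divisoriality of $D_M$ by a standard Nagata-style argument: using $(D_M(X):ID_M(X)) = I^{-1}D_M(X)$ together with $ID_M(X) \cap K = I$, one finds $I_v D_M(X) = ID_M(X)$ and hence $I_v = I$ after contracting. Applying \cite[Theorem 1.5]{GE} once more gives $w$-divisoriality of $D$. The main obstacle is this last descent step: the localization identifications above are routine content-of-polynomial bookkeeping, but the Nagata-ring formula for $(-)^{-1}$ needs some care for arbitrary (not necessarily finitely generated) fractional ideals, since the na\"ive proof only handles the finitely generated case and one must either invoke a stronger known version or argue directly that $(A_S:IA_S) = (A:I)A_S$ persists after the Nagata-type localization.
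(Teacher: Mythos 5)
Your proof of the equivalence $(1)\Leftrightarrow(2)$ is essentially the paper's: both reduce via Proposition~\ref{wm} (plus Lemma~\ref{finchar}) to matching ``weakly Matlis $+$ $t$-locally divisorial'' for $D[X]$ against ``$h$-local $+$ locally divisorial'' for $D[X]_{N_v}$, using the identifications $(D[X]_{N_v})_{MD[X]_{N_v}}=D[X]_{M[X]}$ and the fact that uppers to zero localize to DVRs, and then quoting Theorem~\ref{t:bs} and \cite[Theorem 1.5]{GE}. Where you genuinely diverge is the final assertion. You descend locally: recover that $D$ is weakly Matlis, deduce divisoriality of $D_M$ from that of $D_M(X)$ via $(D_M(X):ID_M(X))=I^{-1}D_M(X)$, and reassemble with \cite[Theorem 1.5]{GE}. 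The paper instead argues globally and in one line: for a nonzero ideal $I$ of $D$ one has $I_w=ID[X]_{N_v}\cap D$ by \cite[Proposition 3.4]{FL} and $(ID[X]_{N_v})_v=I_vD[X]_{N_v}$ by \cite[Corollary 2.3]{K}, so divisoriality of $D[X]_{N_v}$ gives $I_w=ID[X]_{N_v}\cap D=(ID[X]_{N_v})_v\cap D=I_vD[X]_{N_v}\cap D=I_v$. This sidesteps the weakly Matlis descent entirely and also disposes of the worry you flag at the end: the extension formula for $(-)^{-1}$ (equivalently for $(-)_v$) across the Nagata-type localization does hold for arbitrary nonzero fractional ideals --- that is exactly Kang's \cite[Corollary 2.3]{K} --- so your local route can be completed by citing that result (applied to the local ring $D_M$), but the paper's global computation is shorter and needs no case analysis.
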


\begin{proof}  By Proposition~\ref{wm}, $D[X]$ is
    weakly Matlis if and only if $D[X]_{N_v}$ is $h$-local.

If $P$ is an upper to zero in $D[X]$, then $D[X]_{P}$ is a DVR.
Also, for each  $Q \in \op{Max} (D[X]_{N_v})$, we have $Q=
MD[X]_{N_v}$ with $M\in \tmax(D)$ and $(D[X]_{N_v})_{Q}=
D[X]_{MD[X]}$. It follows that $D[X]$ is $t$-locally divisorial if
and only if $D[X]_{N_v}$ is locally divisorial.  Thus (1) $\ra$
(2) by Theorem~\ref{t:bs}, and (2) $\ra$ (1) by \cite[Theorem
1.5]{GE}.

 Now assume that $D[X]_{N_v}$ is divisorial.
For each nonzero ideal $J$ of $D$, we have
    $J_{w}=JD[X]_{N_v} \cap D$ \cite[Proposition 3.4]{FL} and $J_{v}D[X]_{N_v} =
    (JD[X]_{N_v})_{v}$ by \cite[Corollary 2.3]{K}.
    Thus, if  $I$ is a nonzero ideal of $D$, we have
    $$I_{w}´=ID[X]_{N_v} \cap D = (ID[X]_{N_v})_{v}´\cap D=I_{v}D[X]_{N_v} \cap
    D=I_{v}.$$ Thus $D$ is $w$-divisorial.
 \end{proof}

To simplify notation, for an ideal $I$ and a maximal $t$-ideal $M$
of $D$, we set $I_M(X)= ID_{M}(X)$. We have
$I_M(X)=I_M[X]_{M_M[X]}= I[X]_{M[X]}$.

\begin{lemma} \label{loc} If $D$ is a  weakly Matlis domain and  $M \in
\tmax(D)$, then $(I:J)_M(X)=(I_M(X):J_M(X))$, for each pair of
$t$-ideals $I$, $J$ of $D$. In particular, if $J$ is divisorial,
then $J_M(X)$ is divisorial.
\end{lemma}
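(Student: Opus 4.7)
The inclusion $(I:J)_M(X) \sub (I_M(X):J_M(X))$ is immediate from $(I:J) \cdot J \sub I$ by extension to $D_M(X)$. The real work is in the reverse containment. My plan is to take $f \in (I_M(X):J_M(X))$ and produce $\psi \in D[X] \sm M[X]$ with $\psi f \cdot J \sub I[X]$; then $\psi f \in (I[X]:J[X]) = (I:J)[X]$, using the identity $(A:B)[X] = (A[X]:B[X])$ valid for any ideals of a domain, and hence $f = (\psi f)/\psi \in (I:J) D_M(X) = (I:J)_M(X)$. From $fJ \sub I D[X]_{M[X]}$, each $j \in J$ furnishes a local denominator $\psi_j \in D[X] \sm M[X]$ with $\psi_j f j \in I[X]$; the central task is to consolidate these into a single uniform $\psi$ that works for every $j \in J$ simultaneously.

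This consolidation is the main obstacle, and it is where the weakly Matlis hypothesis is decisive: for a general, possibly infinitely generated, $t$-ideal $J$, colon does not commute with localization in the naive way, and the $\psi_j$'s only witness membership in the individual ideals $\mc{S}_j = \{\psi \in D[X] : \psi f j \in I[X]\}$, not in the intersection $\mc{S} = \bc_{j} \mc{S}_j$. To argue $\mc{S} \nsub M[X]$, I would reduce to a statement about a $t$-ideal of $D$: writing $f = \alpha/\beta$ with $\alpha,\beta \in D[X]$, the relevant object is (a version of) the $D$-fractional ideal $(I :_D \alpha J/\beta)$, which is an intersection of the $t$-ideals $(I :_D \alpha j/\beta)$ and hence itself a $t$-ideal, hence a $w$-ideal in the weakly Matlis setting. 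A $w$-ideal fails to be contained in $M$ exactly when it generates $D_M$, so the independence and finite character of $\tmax(D)$ should let me pass from the pointwise information (the $\psi_j$'s) to the required uniform denominator.

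Granting the main equality, the ``in particular'' statement follows by applying it twice. With $I = D$ and the $t$-ideal $J$: $(D:J)_M(X) = (D_M(X):J_M(X))$, so $J^{-1}D_M(X) = (J_M(X))^{-1}$. With $I = D$ and the divisorial (hence $t$-ideal) $J^{-1}$: $(D:J^{-1})_M(X) = (D_M(X):J^{-1}D_M(X))$; the left side equals $J_v D_M(X) = J_M(X)$ (using $J = J_v$), while the right side equals $((J_M(X))^{-1})^{-1} = (J_M(X))_v$, forcing $J_M(X) = (J_M(X))_v$.
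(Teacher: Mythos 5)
The easy inclusion and your derivation of the ``in particular'' statement from the main equality are both fine; the gap is in the reverse inclusion, which is the entire content of the lemma, and your sketch of it does not go through as written. The set you must show escapes $M[X]$ is $\bigcap_{j\in J}\bigl(I[X]:_{D[X]}fj\bigr)$, an intersection of (divisorial, hence $w$-) ideals of $D[X]$, each individually not contained in $M[X]$; to conclude that the \emph{intersection} is not contained in $M[X]$ you need an intersection-commutes-with-localization statement at $M[X]$, i.e.\ weakly-Matlis-type control over the $w$-ideals of $D[X]$ --- not of $D$. But a central point of this paper (Section 2, Examples~\ref{e:mcadam} and~\ref{e:mori}) is that $D$ weakly Matlis does \emph{not} imply $D[X]$ weakly Matlis, so ``the independence and finite character of $\tmax(D)$'' cannot be invoked for these denominators, which genuinely live in $D[X]\sm M[X]$ and cannot in general be pushed down to $D\sm M$. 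Your proposed reduction to ``the $D$-fractional ideal $(I:_D\alpha J/\beta)$'' is also not well defined: $\alpha j/\beta$ lies in $K(X)$, not in $K$ (indeed $f$ may be a genuine rational function, e.g.\ $f=1/\psi$ with $\psi\in D[X]\sm M[X]$), so the contraction to $D$ of $(\beta I[X]:_{D[X]}\alpha j)$ can be $(0)$ even though the colon ideal itself escapes $M[X]$, and in any case there is no reason for that contraction to be a $t$-ideal of $D$.

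The paper's proof sidesteps all of this by changing the order of operations: it writes $D_M(X)=\bigl(D_M[X]\bigr)_{M_M[X]}$ and commutes the colon with each step separately --- first with localization at $M$ (using \cite[Corollary 5.2]{AZ} for the weakly Matlis domain $D$), then with polynomial extension (your identity $(A:B)[X]=(A[X]:B[X])$), then with localization at $M_M[X]$ (using \cite[Corollary 5.2]{AZ} again, this time applied to $D_M[X]$). The crucial input missing from your sketch is that $D_M[X]$ \emph{is} weakly Matlis: this holds because $MD_M$ is the unique maximal $t$-ideal of $D_M$ by \cite[Lemma 3.3]{amz}, so Corollary~\ref{c:umt} applies. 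If you want to salvage a direct uniform-denominator argument, localize at $M$ first and run it inside $D_M[X]$, where that corollary supplies exactly the independence and finite character you are trying to use.
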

\begin{proof}
If  $D$ is a weakly Matlis domain, then for any pair of $t$-ideals
$I$, $J$ and each maximal $t$-ideal $M$ of $D$, we have
$(I:J)_M=(I_M:J_M)$ \cite[Corollary 5.2]{AZ}; in particular,
$(JD_M)_v=J_vD_M$.

Now for $M \in \tmax(D)$, we have $MD_M \in \tmax(D_M)$ by
\cite[Lemma 3.3]{amz}.  Hence $D_M$ is weakly Matlis, and
therefore so is $D_M[X]$ (Corollary~\ref{c:umt}). In addition,
$M_M[X]$ is a maximal $t$-ideal of $D_M[X]$. Thus
$(I:J)_M(X)=(I:J)_M[X]_{M_M[X]}= (I_M:J_M)[X]_{M_M[X]}=
(I_M[X]:J_M[X])_{M_M[X]}=(I_M[X]_{M_M[X]}:J_M[X]_{M_M[X]})=(I_M(X):J_M(X))$.
\end{proof}

An ideal $J$ of $D$ which is $m$-canonical in $(J:J)$ is called
\emph{quasi-$m$-canonical} in \cite [Section 2.3.4]{P}; by
\cite[Lemma 2.56]{P}, $J$ is quasi-$m$-canonical if and only if
$(J:(J:I))=I(J:J)$ for each nonzero ideal $I$ of $D$.

As already noted, $D[X]$ is $w$-divisorial if and only if $D[X]$
is weakly Matlis and $D[X]_Q$ is divisorial for each maximal
$t$-ideal $Q$ of $D[X]$.  The following results shortens the list
of maximal $t$-ideals which have to be checked.

\begin{proposition} \label{wdiv} Assume that $D$ is a
$w$-divisorial  domain. The following statements are equivalent.
    \begin{itemize}
        \item[(1)]  $D[X]$ is $w$-divisorial.
\item[(2)]  $D[X]$ is weakly Matlis and $D_{M}(X)$ is divisorial
for each $M \in \tmax(D)$ such that $(M:M)=(D:M)$.
\item[(3)] $D[X]$ is weakly Matlis and
$M_{M}(X)$ is quasi-$m$-canonical for each $M \in \tmax(D)$ such
that $(M:M)=(D:M)$.
\end{itemize}
\end{proposition}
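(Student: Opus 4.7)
The plan is to apply \cite[Theorem~1.5]{GE} to reduce $w$-divisoriality of $D[X]$ to checking that $D[X]_Q$ is divisorial for each maximal $t$-ideal $Q$ of $D[X]$. Using the description of $\tmax(D[X])$ recalled in Section~2, $D[X]_Q$ is a DVR whenever $Q$ is an upper to zero, and otherwise $Q=M[X]$ for some $M\in\tmax(D)$ and $D[X]_{M[X]}=D_M(X)$. Hence (1) says $D[X]$ is weakly Matlis and $D_M(X)$ is divisorial for every $M\in\tmax(D)$, while (2) asks this only for those $M$ with $(M:M)=(D:M)$.

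For $(1)\Leftrightarrow(2)$, the remaining task is to show that for $M\in\tmax(D)$ with $(M:M)\neq(D:M)$, $D_M(X)$ is automatically divisorial. The inequality forces $MM^{-1}\nsubseteq M$; since $M$ is a maximal $t$-ideal and $M\subseteq MM^{-1}\subseteq D$, this yields $(MM^{-1})_t=D$, that is, $M$ is $t$-invertible. Consequently $MD_M$ is invertible, and hence principal, in $D_M$. As $D$ is $w$-divisorial, $D_M$ is divisorial, so by Theorem~\ref{t:bs} $D_M$ is a valuation domain. Then $D_M(X)$ is also a valuation domain with principal maximal ideal, and is therefore divisorial, again by Theorem~\ref{t:bs}.

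For $(2)\Leftrightarrow(3)$, fix $M\in\tmax(D)$ with $(M:M)=(D:M)$ and set $R=D_M(X)$, $N=M_M(X)$. The same equivalence now gives $MD_M$ nonprincipal; combined with divisoriality of $D_M$ this forces $D_M$ not to be a valuation domain (a valuation domain with idempotent maximal ideal $\mathfrak{m}$ satisfies $\mathfrak{m}^{-1}=V$, so $\mathfrak{m}_v=V\neq\mathfrak{m}$). Theorem~\ref{t:divequiv} therefore applies to $D_M$ and yields $T:=(MD_M:MD_M)=D_M+D_Mu$ for some $u\in T\setminus D_M$. By faithful flatness of $D_M\to R$, $N$ is also nonprincipal and $R$ is not a valuation domain, so Theorem~\ref{t:divequiv} may be applied to $R$ as well.

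The main technical step is to identify $(N:N)=TR$, which then equals $R+Ru$ and is thus a $2$-generated $R$-module. I would establish this by writing a general element of $(N:N)$ as $p/q$ with $p,q\in D_M[X]$ and $q$ primitive, using $N\cap D_M[X]=MD_M[X]$, and invoking a content argument to force each coefficient of $p$ to lie in $(MD_M:MD_M)=T$; the reverse containment $TR\subseteq(N:N)$ is immediate from $T\cdot MD_M\subseteq MD_M$. Granted this, the $2$-generated hypothesis in Theorem~\ref{t:divequiv} is automatic, and that theorem reduces divisoriality of $R$ to $N$ being $m$-canonical in $(N:N)$, which is precisely condition (3). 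I expect the content computation identifying $(N:N)$ with $TR$ to be the main obstacle; everything else is a bookkeeping of Theorems~\ref{t:bs} and~\ref{t:divequiv} together with the standard extension properties of $D\to D(X)$.
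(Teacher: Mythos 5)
Your overall architecture is the paper's: reduce via \cite[Theorem 1.5]{GE} and the description of $\tmax(D[X])$, dispose of the uppers to zero and of the $t$-invertible maximal $t$-ideals separately, and run $(2)\Leftrightarrow(3)$ through Theorem~\ref{t:divequiv} after checking that $(M_M(X):M_M(X))$ is a $2$-generated $D_M(X)$-module. The $(1)\Leftrightarrow(2)$ part is essentially sound (the paper instead quotes \cite[Lemma 3.1]{GE} for ``$M$ $t$-invertible $\Rightarrow D_M$ valuation'' and Kang's results on extended ideals of $D_M(X)$, whereas you use Heinzer's fact that a valuation domain with principal maximal ideal is divisorial --- note that this is not literally what Theorem~\ref{t:bs} states; that theorem gives only the converse implication).

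The genuine gap is exactly the step you flag as ``the main obstacle'': the identification $(N:N)=TR$, i.e.\ $(M_M(X):M_M(X))=(M_M:M_M)D_M(X)$. Your proposed content argument starts by writing a general element of $(N:N)$ as $p/q$ with $q$ primitive, but an arbitrary element of $K(X)$ need not admit such a representation, and the real difficulty lies elsewhere: one has $(M_M[X]:M_M[X])=(M_M:M_M)[X]$ easily, but passing from $x\,M_M[X]\subseteq M_M[X]_{M_M[X]}$ (one denominator for each of the possibly infinitely many elements of $M_M$) to a single denominator, i.e.\ commuting the colon with the localization at $M_M[X]$, requires a finiteness or independence input, since $(I:J)S^{-1}=(IS^{-1}:JS^{-1})$ fails for non-finitely-generated $J$ in general. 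The paper resolves this with Lemma~\ref{loc}, whose proof leans on \cite[Corollary 5.2]{AZ} applied to the weakly Matlis domains $D_M$ and $D_M[X]$; your sketch never invokes weak Matlis-ness at this point and offers no substitute. A second, smaller gap: you infer ``$M_M$ nonprincipal'' from ``$M$ not $t$-invertible,'' but $t$-local principality does not imply $t$-invertibility, so the contrapositive you need is not established by your earlier equivalence; the correct route (again via \cite[Corollary 5.2]{AZ}) is that $(M:M)=(D:M)$ localizes to $(M_M:M_M)=(D_M:M_M)$, which rules out $M_M$ being invertible, hence principal, in $D_M$.
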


\begin{proof}
$(1)\ra(2)$.  This follows from \cite[Theorem 1.5]{GE} and the
fact that $M[X] \in \tmax(D[X])$ for each $M \in \tmax(D)$.

$(2)\ra(1)$. Let $Q$ be a maximal $t$-ideal of $D[X]$. If $Q$ is
an upper to zero, then $D[X]_Q$ is a DVR and hence divisorial.
Hence we may as well assume that $Q=M[X]$ for some maximal
$t$-ideal $M$ of $D$ \cite[Proposition 1.1]{hz}. By \cite[Theorem
1.5]{GE} $D_{M}$ is divisorial. Suppose that $M$ is $t$-invertible
in $D$. Then $D_M$ is a valuation domain \cite[Lemma 3.1]{GE}, and
hence each ideal of $D_M(X)$ is extended from $D$ \cite[Theorem
3.1]{K}. If $I = J(X)$ is an extended ideal of $D_M(X)$, with
$J\sub D_{M}$, then $J(X)=J_{v}(X)= J(X)_{v}$ is divisorial
\cite[Corollary 2.3]{K}. Thus $D[X]_Q$ is divisorial in this case.
If $M$ is not $t$-invertible, then $(M:M)=(D:M)$, and this case is
covered by the hypothesis.

$(2)\ra(3)$. Let $M \in \tmax(D)$ be such that $(M:M)=(D:M)$. By
Lemma \ref{loc}, we have that $(M_M(X):M_M(X))=(D_M(X):M_M(X))$.
In addition, $M_M(X)$ is a divisorial ideal; thus  $S =
(M_M(X):M_M(X))$ is a proper overring of $D_M(X)$. Since $D_M(X)$
is divisorial and $(D_M(X):S) = M_M(X)\neq 0$, then $M_M(X)$ is an
$m$-canonical ideal of $S$ by \cite[Proposition 5.1]{hhp}.

$(3)\ra (2)$  Let $M \in \tmax(D)$ with $(M:M)=(D:M)$. Then $D_M$
is divisorial, whence $T:=(M_M:M_M)$ is a 2-generated $D_M$-module
by Theorem~\ref{t:divequiv}.  Since $D$ is weakly Matlis,
$(M:M)_M=(M_M:M_M)$ by \cite[Corollary 5.2]{AZ}. Hence by
Lemma~\ref{loc}, $T(X)=(M_M:M_M)(X)=(M:M)_M(X)=(M_M(X):M_M(X))$.
By hypothesis, $M_M(X)$ is $m$-canonical in $T(X)$.  Hence by
Theorem~\ref{t:divequiv}, we need only show that $T(X)$ is a
2-generated $D_M(X)$-module.  However, since $T(X)$ is integral
over $D_M(X)$, \cite[Remark 1]{gh} implies that $T(X)=T[X]_{D_M[X]
\setminus M_M[X]}$, and the desired conclusion follows easily.

\end{proof}

We can now show that $w$-divisoriality transfers from $D$ to
$D[X]$ in two cases (of course, $w$-divisoriality of $D[X]$
implies that of $D$ by Proposition~\ref{p:divis}).

    \begin{corollary}\label{PVMD}
    $D$ is an integrally closed $w$-divisorial domain if and only
if $D[X]$ is an integrally closed $w$-divisorial domain.
\end{corollary}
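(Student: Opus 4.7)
The reverse direction is immediate: $D[X]$ integrally closed forces $D$ integrally closed (as a subring), and $D[X]$ being $w$-divisorial forces $D$ to be $w$-divisorial by Proposition~\ref{p:divis}. For the forward direction, integral closedness of $D[X]$ is standard, so the substantive task is to show $D[X]$ is $w$-divisorial, and the plan is to apply Proposition~\ref{wdiv}(2) with both of its clauses trivialized by the integral closedness of $D$.

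The key structural claim is that $D_M$ is a DVR for every maximal $t$-ideal $M$ of $D$. Since $D_M$ is a local, integrally closed, divisorial domain, I would split into three cases. If $M_M$ is principal, then $D_M$ is already a DVR. If $D_M$ is a valuation domain with nonprincipal maximal ideal, the standard identity $M_M^{-1}=D_M$ makes $M_M$ nondivisorial, contradicting the $w$-divisoriality of $D$. Otherwise, Theorem~\ref{t:divequiv} forces $(M_M:M_M)$ to be a 2-generated $D_M$-module of the form $D_M+D_M u$ with $u\notin D_M$; expressing $u^2$ in this generating set yields a relation $u^2=a+bu$ with $a,b\in D_M$, exhibiting $u$ as integral over $D_M$, which contradicts integral closedness. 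Hence $D_M$ is a DVR.

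With this in hand, $D$ is a PvMD, in particular a UMT-domain, so Corollary~\ref{c:umt} promotes the weakly Matlis property from $D$ to $D[X]$. Moreover, $D_M$ being a DVR gives $(M_M:M_M)=D_M\subsetneq M_M^{-1}=(D_M:M_M)$; since localizing the putative equality $(M:M)=(D:M)$ at $M$ would force this local equality (which fails), no maximal $t$-ideal $M$ of $D$ satisfies $(M:M)=(D:M)$. The second clause of Proposition~\ref{wdiv}(2) is therefore vacuously satisfied, and Proposition~\ref{wdiv} concludes that $D[X]$ is $w$-divisorial. The main difficulty I anticipate is the DVR claim itself: specifically ruling out the nonprincipal-max valuation case, where divisoriality fails at $M_M$, and squeezing enough integrality out of Theorem~\ref{t:divequiv}'s 2-generated extension to contradict integral closedness in the non-valuation case. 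Everything else is a routine application of the previously established machinery.
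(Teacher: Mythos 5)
Your argument is essentially correct and lands in the same place as the paper's proof --- an application of Corollary~\ref{c:umt} together with Proposition~\ref{wdiv} in which the second clause of condition (2) is vacuous --- but you get there by a genuinely different route. The paper simply quotes \cite[Theorem 3.3]{GE}: an integrally closed $w$-divisorial domain is a weakly Matlis P$v$MD with every maximal $t$-ideal $t$-invertible, after which the UMT property and the emptiness of the set $\{M \in \tmax(D) \mid (M:M)=(D:M)\}$ are immediate. You instead re-derive the needed local structure internally from Theorems~\ref{t:bs} and~\ref{t:divequiv}: the valuation-with-nonprincipal-maximal-ideal case dies because $M_M$ would not be divisorial, and the non-valuation case dies because $(M_M:M_M)=D_M+D_Mu$ is a module-finite, hence integral, proper ring extension of the integrally closed $D_M$. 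This is more self-contained (it avoids the external citation) at the cost of redoing part of \cite[Theorem 3.3]{GE} by hand.

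Two repairs are needed. First, your claim that $D_M$ is a DVR is false as stated: Theorem~\ref{t:bs} only gives that a local divisorial domain with principal maximal ideal is a valuation domain, and such a domain may have rank greater than one (a rank-two discrete valuation domain with principal maximal ideal is integrally closed and divisorial). Fortunately every consequence you draw --- that $D$ is a P$v$MD, hence a UMT-domain, and that $(M_M:M_M)=D_M\subsetneq \pi^{-1}D_M=(D_M:M_M)$ when $M_M=\pi D_M$ --- holds verbatim for a valuation domain with principal maximal ideal, so the argument survives. Second, the step ``localizing the putative equality $(M:M)=(D:M)$ at $M$ would force this local equality'' is not free: one only has the inclusions $(M:M)_M\subseteq(M_M:M_M)$ and $(D:M)_M\subseteq(D_M:M_M)$ in general, so you need the fact that colons of $t$-ideals localize at maximal $t$-ideals in a weakly Matlis domain, i.e.\ $(I:J)_M=(I_M:J_M)$ (\cite[Corollary 5.2]{AZ}, as used in Lemma~\ref{loc}). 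With that citation supplied the proof is complete.
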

\begin{proof} An integrally closed $w$-divisorial domain
is a weakly Matlis P$v$MD (hence a UMT-domain) with each maximal
$t$-ideal $t$-invertible \cite[Theorem 3.3]{GE}. Hence we conclude
by applying Corollary~\ref{c:umt} and Proposition \ref{wdiv}.
\end{proof}

    \begin{proposition}  If $D$ is a Mori domain, then
    $D$ is $w$-divisorial if and only if $D[X]$ is
$w$-divisorial.
\end{proposition}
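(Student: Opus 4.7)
The ``if'' direction is Proposition~\ref{p:divis}. For the converse, suppose $D$ is a Mori $w$-divisorial domain. The decisive first observation is that $D$ must then in fact be \emph{strong} Mori: in a $w$-divisorial domain every $v$-ideal is a $w$-ideal and conversely, so the Mori condition (ACC on $v$-ideals) coincides with the strong Mori condition (ACC on $w$-ideals). Consequently \cite[Theorem 1.9]{wm} gives that $D_M$ is Noetherian for each $M\in\tmax(D)$, and since $D_M$ is also divisorial (because $D$ is $w$-divisorial), the classical Bass-Matlis theorem makes $D_M$ one-dimensional and presents $M_M^{-1}=(M_M:M_M)$ as a 2-generated $D_M$-module whenever $M_M$ is nonprincipal.

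Since every maximal $t$-ideal of $D$ is therefore of height one, Proposition~\ref{p:wmnoe} immediately yields that $D[X]$ is weakly Matlis. It remains, by Proposition~\ref{wdiv}(2), to show that $D_M(X)$ is divisorial for each $M\in\tmax(D)$ with $(M:M)=(D:M)$. Such an $M$ is not $t$-invertible, so $M_M$ is nonprincipal; writing $M_M^{-1}=D_M+D_Mu$, I observe that $D_M(X)=D_M[X]_{M_M[X]}$ is Noetherian, local, and one-dimensional, since $\operatorname{ht}(M_M[X])=\operatorname{ht}(M_M)=1$ in the Noetherian ring $D_M[X]$. Using flatness of $D_M(X)$ over $D_M$ together with the finite generation of $M_M$, one has $(M_MD_M(X))^{-1}=M_M^{-1}D_M(X)=D_M(X)+D_M(X)u$, a 2-generated $D_M(X)$-module. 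Bass-Matlis then forces $D_M(X)$ to be divisorial, and Proposition~\ref{wdiv} delivers the $w$-divisoriality of $D[X]$.

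The main obstacle is the opening reduction from Mori to strong Mori; once this is in hand, the remainder of the argument is essentially a mechanical pass through Propositions~\ref{p:wmnoe} and~\ref{wdiv} combined with the classical Bass-Matlis theorem and the standard flat-base-change identity $(IR')^{-1}=I^{-1}R'$ for finitely generated $I$. It is worth noting that this route also sidesteps having to verify the $m$-canonical condition of Proposition~\ref{wdiv}(3) by hand, since Noetherianity of $D_M$ (and hence of $D_M(X)$) lets the 2-generated $M^{-1}$ criterion alone settle divisoriality.
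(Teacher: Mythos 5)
Your proof is correct and follows essentially the same route as the paper's: reduce to the strong Mori case, invoke Proposition~\ref{p:wmnoe} to get that $D[X]$ is weakly Matlis, and apply the Bass--Matlis $2$-generated criterion to $D_M(X)$ via Proposition~\ref{wdiv}. The only differences are cosmetic and sound: you prove the Mori-to-strong-Mori reduction and the height-one claim directly (where the paper cites \cite[Corollary 4.3]{GE}), and you obtain the $2$-generation of $(M_MD_M(X))^{-1}$ by flat base change rather than via \cite[Remark 1]{gh}.
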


\begin{proof} Suppose that $D$ is a $w$-divisorial Mori domain. Then
$D$ is a strong Mori domain of $t$-dimension 1 by \cite[Corollary
4.3]{GE}. Proposition~\ref{p:wmnoe} then yields that $D[X]$ is
weakly Matlis.  Let $M$ be a maximal $t$-ideal of $D$.  Then $D_M$
is Noetherian by \cite[Theorem 1.9]{wm}.  Hence $T=(D_M:MD_M)$ is
a 2-generated $D_M$-module by \cite[Theorem 3.8]{ma}.  It follows
as in the proof of Proposition~\ref{wdiv} that $T(X)$ is a
2-generated $D_M(X)$-module.  Another application of \cite[Theorem
3.8]{ma} then yields that $D_M(X)$ is divisorial. Hence $D[X]$ is
$w$-divisorial by \cite[Theorem 1.5]{GE} or
Proposition~\ref{wdiv}.

\end{proof}

We close this section with further discussion of
Question~\ref{q:wdiv}. It is helpful to recall an old question and
to add another one:

\begin{question} \label{q:heinzer} Is the integral closure of a
divisorial domain a Pr\"ufer domain?
\end{question}

\begin{question} \label{q:umt} Is a $w$-divisorial domain a
UMT-domain?
\end{question}

Question~\ref{q:heinzer} was asked by Heinzer in \cite{h}.  It is
fairly easy to show that Questions~\ref{q:heinzer} and \ref{q:umt}
are equivalent.  First, suppose that Question~\ref{q:heinzer} has
a positive answer, and let $D$ be a $w$-divisorial domain.  By
\cite[Theorem 1.5]{GE} $D_M$ is divisorial for each maximal
$t$-ideal $M$ of $D$, whence by assumption each such $D_M$ has
Pr\"ufer integral closure.  We then have that $D$ is a UMT-domain
by \cite[Theorem 2.4]{dhlrz}.  Conversely, suppose that
Question~\ref{q:umt} has a positive answer, and let $D$ be a
divisorial domain.  Then $D$ is automatically $w$-divisorial and
therefore UMT by assumption. If $M$ is a maximal ideal of $D$,
then $M$ is divisorial and hence a maximal $t$-ideal.  Again by
\cite[Theorem 2.4]{dhlrz} $D$ has Pr\"ufer integral closure.

Now consider Question~\ref{q:wdiv} again.  If $D$ is
$w$-divisorial, and we assume that this implies that $D$ is UMT
(that is, that the equivalent Questions~\ref{q:heinzer} and
\ref{q:umt} have positive answers), then ($D$ is weakly Matlis and
therefore) $D[X]$ is weakly Matlis by Corollary~\ref{c:umt}.  To
show that $D[X]$ is $w$-divisorial, it would therefore suffice to
show that $D[X]_{M[X]}=D_M[X]_{MD_M[X]}$ is divisorial for each
maximal $t$-ideal $M$ of $D$.  Since $D_M$ is divisorial, this
leads to the following

\begin{question} \label{q:div} If $(D,M)$ is a local divisorial
domain, is $D[X]_{M[X]} (=D(X))$ divisorial?
\end{question}

Let $(D,M)$ be a local divisorial domain.  According to
Proposition~\ref{wdiv} $(1) \lra (3)$, we may assume that $D
\subsetneqq T:= (M:M)=(D:M)$, and we have to show that $M(X)$ is
$m$-canonical in $T(X)$.  We are able to do this in two of the
three cases mentioned in Theorem~\ref{t:bs}:

\bigskip

\noindent Case 3.  $T$ is a valuation domain with maximal ideal
$M$.  Then $T(X)$ is also a valuation domain, and its maximal
ideal $M(X)$ is $m$-canonical by \cite[Proposition 6.2]{hhp}.
\bigskip

\noindent Case 2. $T$ is a Pr\"ufer domain with exactly two
maximal ideals $N_1$ and $N_2$ with $M = N_1 \cap N_2$.  Note that
$T$ is necessarily the integral closure of $D$. By \cite[Theorem
6.7]{hhp} (or by \cite[Proposition 2.9]{b}), $T$ is $h$-local.  It
is then easy to see that $T(X)$ is $h$-local. We claim that $M(X)
= N_1(X) \cap N_2(X)$ is an $m$-canonical ideal of $T(X)$.  If
both $N_1$ and $N_2$ are noninvertible, this follows from the
proof of \cite[Theorem 6.7]{hhp}.  If only one, say ($N_1$ hence)
$N_1(X)$ is noninvertible, then the proof shows that $N_2(X)$ is
$m$-canonical.  But multiplication by a principal ideal is
harmless, so that in this case $M(X)=N_1(X)N_2(X)=N_1(X) \cap
N_2(X)$ is also $m$-canonical. Finally, if both $N_1(X)$ and
$N_2(X)$ are principal, then the proof guarantees that $T(X)$
itself is (divisorial hence) $m$-canonical, and again it follows
easily that $M(X)$ is $m$-canonical. \bigskip


\begin{thebibliography}{9}


\bibitem{AZ} D. D. Anderson and M. Zafrullah, {\it Independent
locally-finite intersections of localizations}, Houston J. Math.
{\bf 25} (1999), 433-452.

\bibitem{amz} D. D. Anderson, J. Mott, and M. Zafrullah,
\emph{Unique factorization in non-atomic integral domains}, Boll.
Unione Mat. Ital. Sez. B Artic. Rit. Mat. \textbf{2} (1999),
341-352.

\bibitem{bar} V. Barucci, \emph{On a class of Mori domains}, Comm.
Algebra \textbf{11} (1983), 1989-2001.

\bibitem{ba} H. Bass, \emph{On the ubiquity of Gorenstein rings},
Math. Z. \textbf{82} (1963), 8-28.

\bibitem{bs} S. Bazzoni and L. Salce, \emph{Warfield domains}, J.
Algebra \textbf{185} (1996), 836-868.

\bibitem{b} S. Bazzoni, \emph{Divisorial domains}, Forum Math.
\textbf{12} (2000), 397-419.

\bibitem{bhlp} V. Barucci, E. Houston, T. Lucas, and I. Papick,
\emph{$m$-Canonical ideals in integral domains II}, in \emph{Ideal
theoretic methods in commutative algebra}, D.D. Anderson and I.
Papick, eds., Lecture Notes in Pure and Appl. Math. \textbf{220},
pp. 89-108, Dekker, New York, 2001.


\bibitem{dhlrz} D. Dobbs, E. Houston, T. Lucas, M. Roitman, and M.
Zafrullah, \emph{On $t$-linked overrings}, Comm. Algebra
\textbf{20} (1992), 1463-1488.


    \bibitem{FL} M. Fontana and A. Loper, {\it Nagata rings, Kronecker
function rings and related semistar operations}, Comm. Algebra {\bf
31} (2003), 4775-4805.

\bibitem{GE} S. El~Baghdadi and S. Gabelli, \emph{$w$-divisorial domains},
J. Algebra \textbf{285} (2005), 335--355.


\bibitem{gh} R. Gilmer and W. Heinzer, \emph{Intersections of
quotient rings of an integral domain}, J. Math. Kyoto Univ.
\textbf{7} (1967), 132-149.

\bibitem{ghof} R. Gilmer and J. Hoffmann, \emph{A characterization
of Pr\"ufer domains in terms of polynomials}, Pacific J. Math.
\textbf{60} (1975), 81-85.


\bibitem{h} W. Heinzer, \emph{Integral domains in which each
non-zero ideal is divisorial}, Mathematika \textbf{15} (1968),
164-170.

\bibitem{hhp} W. Heinzer, J. Huckaba, I. Papick,
\emph{$m$-Canonical ideals in integral domains}, Comm. Algebra
\textbf{26} (1998), 3021-3043.

\bibitem{hz} E. Houston and M. Zafrullah, \emph{On
$t$-invertibility II}, Comm. Algebra \textbf{17} (1989),
1955-1969.

\bibitem{hlv} E. Houston, T. Lucas, and T.M. Viswanathan,
\emph{Primary decomposition of divisorial ideals in Mori domains},
J. Algebra \textbf{117} (1988), 327-342.

\bibitem{km} S. Kabbaj and A. Mimouni, \emph{$t$-Class semigroups
of integral domains}, J. Reine Angew. Math. \textbf{612} (2007),
213-229.

\bibitem{K} B.G. Kang, \emph{Pr\"ufer $v$-multiplication domains
and the ring $R[X]_{N_v}$}, J. Algebra \textbf{123} (1989),
151-170.

\bibitem{ma} E. Matlis, \emph{Reflexive domains}, J. Algebra
\textbf{8} (1968), 1-33.

\bibitem{m} S. McAdam, \emph{A Noetherian example}, Comm. Algebra
\textbf{4} (1976), 245-247.

\bibitem{P} G. Picozza, \emph{Semistar Operations and Multiplicative Ideal Theory},
Tesi di Dottorato, University  ``Roma Tre", Rome, June 2004.

\bibitem{pr} B. Prekowitz, \emph{Intersections of quasi-local
domains}, Trans. Amer. Math. Soc. \textbf{181} (1973), 329-339.

\bibitem{q} J. Querr\'e, \emph{Sur les anneaux reflexifs}, Canad. J. Math. \textbf{27} (1975), 1222-1228.


\bibitem{wm} F. Wang and R. McCasland, \emph{On strong Mori
domains}, J. Pure Appl. Algebra \textbf{135} (1999), 155-165.

\end{thebibliography}
\end{document}